\documentclass[11pt]{amsart}

\usepackage[T1]{fontenc}
\usepackage{lmodern}
\usepackage{microtype}
\usepackage{mathtools,amssymb}
\usepackage{xcolor}
\usepackage[colorlinks=true,linkcolor=blue!55!black,
  citecolor=blue!55!black,urlcolor=blue!55!black]{hyperref}

\allowdisplaybreaks

\newtheorem{theorem}{Theorem}[section]
\newtheorem{proposition}[theorem]{Proposition}
\newtheorem{lemma}[theorem]{Lemma}
\newtheorem{corollary}[theorem]{Corollary}
\theoremstyle{remark}

\newcommand{\T}{\mathbb T}
\newcommand{\Sone}{\mathbb S^1}
\newcommand{\Z}{\mathbb Z}
\newcommand{\C}{\mathbb C}
\newcommand{\dd}{\,\mathrm d}
\newcommand{\e}{\varepsilon}
\DeclarePairedDelimiter{\abs}{\lvert}{\rvert}
\DeclarePairedDelimiter{\norm}{\lVert}{\rVert}
\numberwithin{equation}{section}
\title[Fourier summation formulas for degree]
{Nonexistence of universal Fourier \\ summation formulas for the degree \\ below the H\"older threshold $\alpha=1/3$}

\author{Micha{\l} Cieszy\'nski}
\address{Faculty of Mathematics, Informatics and Mechanics,
University of Warsaw, Banacha 2, 02-097 Warsaw, Poland}
\email{\href{mailto:m.cieszynski@student.uw.edu.pl}{m.cieszynski@student.uw.edu.pl}}
\subjclass[2020]{Primary 42A16, 55M25; Secondary 46E35}
\keywords{topological degree, winding number, Fourier coefficients,
summation methods, fractional Sobolev spaces}

\begin{document}

\begin{abstract}
Let $(\sigma_{n,\e})_{n\in\Z,\,0<\e<1}$ be a summation process in the sense of Brezis, that is:
\begin{equation*}
  \forall \varepsilon\in(0,1), \ \sup_{n\in\Z}\abs{n\sigma_{n,\e}}<\infty
  \qquad  \forall n\in\Z \lim_{\varepsilon\rightarrow0^+}\sigma_{n,\e}=1.    \end{equation*}
We prove that for every $0<\alpha<1/3$ there exists $f\in C^{0,\alpha}(\Sone;\Sone)$ such that
\begin{equation*}
    \sum_{n\in\Z}\sigma_{n,\varepsilon}n|\hat f(n)|^2
\end{equation*}
does not converge to $\deg f$ as $\e\rightarrow 0^+$. This gives a negative answer to Open Problem 5.6 from Brezis's list of favourite open problems for all $p>3$ and $0<\alpha<1/3$, leaving the endpoint $C^{0,1/3}$ unresolved.
We also observe that, under their literal formulations, Open Problems 5.7
and 5.8 have immediate positive and negative answers, respectively. The
proof of the main result combines degree-zero quotients of Blaschke
factors with a Baire category argument.
\end{abstract}

\maketitle

\section{Introduction and main results}
For $f\in C(\Sone;\Sone)$, choose a continuous lift $\varphi:[0,2\pi]\rightarrow\mathbb{R}$ satisfying $f(e^{it})=e^{i\varphi(t)}$ and define
\[\deg f\coloneqq \frac{\varphi(2\pi)-\varphi(0)}{2\pi}.\]
For $f\in \operatorname{VMO}(\Sone;\Sone)$ we use the Brezis--Nirenberg $\operatorname{VMO}$ degree; see~\cite{BrezisNirenberg1995}.\\
Let $f:\Sone\to\Sone$ and write
\[
  \widehat f(n)=\int_{\Sone} f(z)\overline z^{\,n}\dd \mu(z),
  \qquad n\in\Z,
\]
where $\mu$ denotes the arclength measure on $\Sone$ normalised so that $\mu(\Sone)=1$. If
\mbox{$f\in H^{1/2}(\Sone;\Sone)$}, then the Fourier formula
\begin{equation}
  \deg f=\sum_{n\in\Z}n\abs{\widehat f(n)}^2                 \label{eq:degree-formula}
\end{equation}
is absolutely convergent.  Formula~\eqref{eq:degree-formula} was discovered by
Brezis following a question of Gelfand; see
Brezis--Nirenberg~\cite{BrezisNirenberg1995} and
Brezis--Mironescu~\cite[Chapter~12]{BrezisMironescu2021}.

Outside $H^{1/2}$ the series in~\eqref{eq:degree-formula} need not converge, but it is unknown in general whether the degree can be recovered from absolute values of Fourier coefficients alone.
The first partial answers were given by Korevaar~\cite{Korevaar1999}, he considered two summation processes, namely symmetric partial sums:
\[\lim_{N\rightarrow\infty}\sum_{n=-N}^N n|\hat f(n)|^2\]
and Abel summation:
\[\lim_{r\rightarrow 1^-}\sum_{-\infty}^\infty n|\hat f(n)|^2r^{|n|}\]
He proved that both formulas above equal $\deg f$ for $f\in C\cap BV$, but each formula fails for some $f\in C$. Brezis~\cite{Brezis2006} obtained the formula
\begin{equation}
    \deg f=\lim_{\varepsilon\rightarrow 0^+}\sum_{n\in\mathbb Z\setminus \{0\}} n |\hat f(n)|^2\frac{\sin(n\varepsilon)}{n\varepsilon}, \qquad \text{for } f\in W^{1/3,3}(\Sone;\Sone).
    \label{eq:brezisdegree}
\end{equation}
while Kahane~\cite{Kahane2005,Kahane2011} proved that there exists $f\in C^{0,1/3}(\Sone;\Sone)$ such that the formula in $\eqref{eq:brezisdegree}$ does not hold for $f$.
Bourgain and
Kozma~\cite{BourgainKozma2007} constructed maps $f,g\in C(\Sone;\Sone)$, such that \[|\hat f(n)|=|\hat g(n)| \ \forall_{n\in\mathbb Z}, \qquad \deg f\neq\deg g\]  Their result rules out recovery of degree from Fourier moduli
on all of $C(\Sone;\Sone)$, but not on $W^{1/p,p}$ for $p>3$.

Brezis formulated a question in terms of summation processes
\cite[Open Problem~5.6]{Brezis2023}.  He defines a summation process to be a family
$(\sigma_{n,\e})_{n\in\Z,\,0<\e<1}$ such that
\begin{align}
  \sup_{n\in\Z}\abs{n\sigma_{n,\e}}&<\infty
  &&\text{for every $\e\in(0,1)$},                         \label{eq:summation-1}\\
  \sigma_{n,\e}&\longrightarrow1
  &&\text{as $\e\rightarrow 0^+$, for every $n\in\Z$.}         \label{eq:summation-2}
\end{align}
The problem asks, for fixed $p>3 \, (\text{resp. } 0<\alpha\le 1/3)$, whether one can choose such a process,
depending only on $p$ (resp. $\alpha$), for which
\begin{equation}
  \sum_{n\in\Z}n\sigma_{n,\e}\abs{\widehat f(n)}^2
  \longrightarrow\deg f                                  \label{eq:recovery}
\end{equation}
for every $f\in W^{1/p,p}(\Sone;\Sone)$ (resp. $f\in C^{0,\alpha}(\Sone;\Sone)$).  We answer this question negatively for $p>3$ and $\alpha< 1/3$. \\
For a further generalization of Brezis's summation formula \eqref{eq:recovery}, for a sequence $b\in\ell^\infty(\Z;\C)$ we define
\[T_b(f)\coloneqq\sum_{n\in\Z}b_n|\hat f(n)|^2.\]

\begin{theorem}\label{thm:main}
Let $0<\alpha<1/3$. For every sequence
$(b^{(j)})_{j\in\mathbb N} 
\subset\ell^\infty(\mathbb Z;\mathbb C)$, one has
\begin{equation}
  \sup_{f\in C^{0,\alpha}(\Sone;\Sone)}\limsup_{j\to\infty}
  \abs{T_{b^{(j)}}(f)-\deg f}=\infty.                       \label{eq:main}
\end{equation}
In particular, no sequence $T_{b^{(j)}}$ converges pointwise to $\deg f$ on $C^{0,\alpha}(\Sone;\Sone)$.
\end{theorem}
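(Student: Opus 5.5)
The plan is to argue by contradiction through the Baire category theorem, so that the unknown sequence $(b^{(j)})$ is replaced by a uniform bound on a small Hölder ball. Assume that \eqref{eq:main} fails, so that
$M\coloneqq\sup_{f}\limsup_{j}\abs{T_{b^{(j)}}(f)-\deg f}<\infty$. Let $X$ be $C^{0,\alpha}(\Sone;\Sone)$ with the metric $\norm{f-g}_{C^{0,\alpha}}$; it is a closed subset of the Banach space $C^{0,\alpha}(\Sone;\C)$, hence complete. For $k\in\mathbb N$ put
\[
F_k\coloneqq\Bigl\{f\in X:\ \abs{T_{b^{(j)}}(f)-\deg f}\le M+1 \text{ for all } j\ge k\Bigr\}.
\]
These sets are closed, for two reasons. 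First, each $T_b$ with $b\in\ell^\infty$ is continuous even in $L^2$, since $\abs{T_b(f)-T_b(g)}\le\norm{b}_\infty\norm{f-g}_{L^2}\bigl(\norm f_{L^2}+\norm g_{L^2}\bigr)$. Second, $\deg$ is locally constant on $C(\Sone;\Sone)$. Since $\bigcup_k F_k=X$, Baire's theorem yields $k$, $f_0\in X$ and $r>0$ such that $\abs{T_{b^{(j)}}(f)-\deg f_0}\le M+1$ for all $j\ge k$ and all $f$ with $\norm{f-f_0}_{C^{0,\alpha}}<r$ (shrinking $r$ keeps the degree equal to $d\coloneqq\deg f_0$). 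We may also take $f_0$ smooth, because smooth maps can be brought into any Hölder ball up to a slightly smaller exponent: replace $\alpha$ by some $\alpha'\in(\alpha,1/3)$ from the start and approximate in $C^{0,\alpha}$.

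The contradiction will come from perturbations $f=f_0 h$, where $h$ is a degree-zero quotient of Blaschke factors
\[
h(z)=\prod_{i=1}^{N}\frac{z-a_i}{1-\overline{a_i}z}\cdot\frac{1-\overline{c_i}z}{z-c_i},
\]
with $a_i$ just inside the disc and $c_i=1/\overline{a_i'}$ reflected just outside, both near the same boundary point $\zeta_i$ and at mutual distance $\eta\ll\delta\coloneqq 1-\abs{a_i}$. Each factor is $\equiv 1$ away from an arc of length $\sim\delta$ around $\zeta_i$. Near $\zeta_i$ its phase makes an almost full turn and then unwinds, but its amplitude is small when $\eta/\delta$ is small. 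In particular, $\norm{h-1}_{C^{0,\alpha}}\lesssim(\eta/\delta)\,\delta^{-\alpha}$ for one factor, and similarly for well-separated families of factors. Such an $h$ therefore lies in the ball once the parameters are tuned, yet it pushes Fourier mass of $f_0$ to frequencies $\abs n\sim\delta^{-1}$.

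The key computation is to expand $T_b(f_0h)-T_b(f_0)$. One does this by writing $\widehat{f_0h}=\widehat{f_0}*\widehat h$ and using that the Fourier coefficients of Blaschke quotients are explicit geometric sequences. Since the $b^{(j)}$ are arbitrary, linearity must be exploited rather than the specific values $b^{(j)}_n$. Given one admissible $h$, we also have $\bar h$ (swap the roles of $a_i$ and $c_i$), and rotations $h(e^{i\theta}\cdot)$, together with their products. Averaging over these symmetries should isolate the quantity $\sum_n b_n\abs{\widehat h(n)}^2$ restricted to a dyadic block $\abs n\sim\delta^{-1}$. By the Baire bound this quantity is at most $4(M+1)$ for every $j\ge k$.

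The remaining task is to choose $N$ disjoint families $h_1,\dots,h_N$ at scales $\delta_1\gg\delta_2\gg\cdots$. The aim is for their product still to lie in the ball, while the combination $h_1h_2\cdots h_N$ is conjugated by $f_0$ so that it reproduces the known lower-bound phenomenon of Kahane for the Brezis kernel. The combined contribution should then grow like $N$: the cross terms are controlled by scale separation, and the diagonal terms add coherently after choosing the relative phases by pigeonhole, depending on the bounded complex numbers $b^{(j)}_n$. The budget $\sum_i (\eta_i/\delta_i)\delta_i^{-\alpha}<r$ must be compatible with each block contributing $\gtrsim 1$ to $T$. Heuristically, a block contributes $\sim(\eta/\delta)^2\cdot\delta^{-1}\cdot(\text{number of bumps})$; taking $\sim\delta^{-1+3\alpha}\cdot(\text{small})$ bumps per scale balances this against the Hölder constraint exactly when $\alpha<1/3$, which is where the threshold enters. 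This balancing of Hölder cost against contribution to $T_b$, uniformly over unknown bounded $b^{(j)}$, is the main obstacle. My first attempt would be to fix $j$ large, choose phases by a probabilistic (random rotations) argument so that $\mathbb E\abs{T_{b^{(j)}}(f_0h)-d}$ exceeds $M+1$, and then derandomize.
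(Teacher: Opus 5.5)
There is a genuine gap: nothing in your plan gives a \emph{lower} bound on the response $T_{b^{(j)}}(f_0h)-T_{b^{(j)}}(f_0)$ when $b^{(j)}$ is an arbitrary bounded sequence. After the Baire step you use only the local bound on a ball of degree-$d$ maps, and that bound by itself contradicts nothing. For example, the constant sequence $b^{(j)}_n\equiv d$ gives $T_{b^{(j)}}(f)=d\norm{f}_2^2=d$ for every circle-valued $f$. So the hypothesis has to be used a second time, at maps \emph{outside} the Baire ball with degrees different from $d$, to learn anything about $b^{(j)}$. Your plan never does this.

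Your heuristic that a block contributes $\sim(\eta/\delta)^2\delta^{-1}\cdot(\text{number of bumps})$ tacitly assumes $\abs{b^{(j)}_n}\sim\abs n$ for $\abs n\sim\delta^{-1}$. Even if $b^{(j)}_n=n$ on all frequencies carried by $f_0$ and $f_0h$, it is wrong, because the change is then essentially $\deg(f_0h)-\deg f_0=0$. The response is governed by where $b^{(j)}$ stops behaving like $n$, which is an unknown, $j$-dependent scale. So neither Kahane's analysis of the specific Brezis kernel nor pigeonholing phases applies; pigeonholing can align signs but cannot create size. Similarly, the useful output of rotation averaging is not $\sum_nb_n\abs{\widehat h(n)}^2$ but the identity
\[
\frac1{2\pi}\int_0^{2\pi}T_b\bigl(f_0\,h(e^{i\theta}\cdot)\bigr)\dd\theta=\sum_{n\in\Z}\abs{\widehat h(n)}^2\,T_b(z^nf_0).
\]
It expresses the average through $T_b$ at the degree-shifted maps $z^nf_0$, and that is exactly the bridge to the global hypothesis.

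This is how the paper closes the argument. Set $m^{(j)}_n:=\tfrac12\bigl(T_{b^{(j)}}(z^nf_0)-T_{b^{(j)}}(z^{-n}f_0)\bigr)$.
\begin{itemize}
\item The hypothesis at $z^{\pm n}f_0$, which have degrees $d\pm n$, gives $\abs{m^{(j)}_n-n}\le M+1$ for fixed $n$ and large $j$. Since $3\alpha<1$, this forces $\sup_{n\ge1}\abs{m^{(j)}_n}n^{-3\alpha}\to\infty$.
\item Combining the identity above with its analogue for $\overline{H(e^{i\theta}\cdot)}$ yields some $\theta$ and $G\in\{H(e^{i\theta}\cdot),\overline{H(e^{i\theta}\cdot)}\}$ with $\abs{T_{b^{(j)}}(f_0G)-T_{b^{(j)}}(f_0)}\ge\abs{\sum_nm^{(j)}_n\abs{\widehat H(n)}^2}$. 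Your closing idea of random rotations followed by derandomisation is this step, but its lower bound must come from $m^{(j)}$.
\item Baire already makes the bound uniform in $j\ge k$, so one scale per $j$ suffices and no multi-scale construction is needed. Take $H=h_{a,b}(z^K)$, with $K$ nearly maximising $\abs{m^{(j)}_K}/K^{3\alpha}$ and $a,b$ consecutive nodes of a $\tanh$-grid. Then $\norm{H-1}_{C^{0,\alpha}}\lesssim\eta$ and the response is $\gtrsim\eta^3\sup_n\abs{m^{(j)}_n}n^{-3\alpha}$.
\end{itemize}
The condition $\alpha<1/3$ enters both in the growth of $\sup_n\abs{m^{(j)}_n}n^{-3\alpha}$ and in the decay $(1-t^2)^{1-3\alpha}$ of the weighted sums $S(t)$.

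Two smaller slips. First, with $c_i=1/\overline{a_i'}$ outside the disc, your factor equals $\frac{\overline{a_i'}}{a_i'}\cdot\frac{z-a_i}{1-\overline{a_i}z}\cdot\frac{z-a_i'}{1-\overline{a_i'}z}$, which has degree $2$, not $0$; take both points inside the disc. Second, the reduction to smooth $f_0$ is not justified as stated: smooth maps are not dense in $C^{0,\alpha'}$, and $C^{0,\alpha}$-approximation does not land in a $C^{0,\alpha'}$-ball. You could repair this by running Baire in the closure of smooth maps, but the reduction is unnecessary, since the multiplication estimate handles arbitrary $f_0$.
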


\begin{corollary}\label{cor:brezis}
For $0<\alpha<1/3$, no summation process satisfying~\eqref{eq:summation-1}--\eqref{eq:summation-2} satisfies
\eqref{eq:recovery} for every $f\in C^{0,\alpha}(\Sone;\Sone)$.  In particular, Brezis's Open
Problem~5.6 has a negative answer in  $C^{0,\alpha}(\Sone;\Sone)$ for every $0<\alpha<1/3$ and in $W^{1/p,p}(\Sone;\Sone)$ for every $p>3$.
\end{corollary}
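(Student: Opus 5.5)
We deduce the corollary from Theorem~\ref{thm:main} by restricting the summation process to a sequence $\e_j\to0^+$ and then using Sobolev embedding for the $W^{1/p,p}$ part.

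\textbf{Step 1: reduce to Theorem~\ref{thm:main}.} Suppose a summation process $(\sigma_{n,\e})$ satisfying \eqref{eq:summation-1}--\eqref{eq:summation-2} had \eqref{eq:recovery} for every $f\in C^{0,\alpha}(\Sone;\Sone)$. Fix any sequence $\e_j\to0^+$ and put
\[
b^{(j)}_n\coloneqq n\sigma_{n,\e_j}.
\]
By \eqref{eq:summation-1}, each $b^{(j)}$ lies in $\ell^\infty(\Z;\C)$, so $T_{b^{(j)}}$ is defined. Since $\sum_n\abs{\widehat f(n)}^2=1$ and $b^{(j)}$ is bounded, the series defining $T_{b^{(j)}}(f)$ converges absolutely, and
\[
T_{b^{(j)}}(f)=\sum_{n}n\sigma_{n,\e_j}\abs{\widehat f(n)}^2 .
\]
Then \eqref{eq:recovery} gives $T_{b^{(j)}}(f)\to\deg f$ for every $f\in C^{0,\alpha}$. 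Hence $\limsup_j\abs{T_{b^{(j)}}(f)-\deg f}=0$ for every such $f$, and the supremum in \eqref{eq:main} is $0$, not $\infty$. This contradicts Theorem~\ref{thm:main}. Condition \eqref{eq:summation-2} is not used here: the theorem rules out every bounded family $b^{(j)}$.

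\textbf{Step 2: Sobolev case.} Let $p>3$ and pick $\alpha$ with $0<\alpha<1/3$. We want an inclusion $C^{0,\alpha}(\Sone;\Sone)\subset W^{1/p,p}(\Sone;\Sone)$, for which it suffices that $\alpha>1/p$. Since $1/p<1/3$, we may choose $\alpha\in(1/p,1/3)$. For $f\in C^{0,\alpha}$, the Gagliardo seminorm is
\[
\iint\frac{\abs{f(x)-f(y)}^p}{\abs{x-y}^{2}}\dd x\dd y\lesssim\iint\abs{x-y}^{\alpha p-2}\dd x\dd y<\infty,
\]
because $\alpha p-2>-1$. Thus any process that worked on $W^{1/p,p}$ would work on $C^{0,\alpha}$, which Step~1 forbids.

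\textbf{Step 3: degree compatibility.} The quantity $\deg f$ in \eqref{eq:recovery} must mean the same thing on both spaces. For continuous $f$, the VMO degree coincides with the classical degree, so there is no ambiguity.

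Every step is routine. The only points requiring care are the choice $\alpha\in(1/p,1/3)$, which is possible only because $p>3$, and the check that the series defining $T_{b^{(j)}}(f)$ converges.
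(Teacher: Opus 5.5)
Your proposal is correct and follows the paper's proof essentially verbatim. You specialize to $b^{(j)}_n=n\sigma_{n,\e_j}$ along a sequence $\e_j\to0^+$ to contradict Theorem~\ref{thm:main}, then transfer to $W^{1/p,p}$ via the embedding \eqref{eq:embedding} with $\alpha\in(1/p,1/3)$; your explicit Gagliardo computation and the degree-compatibility remark only spell out what the paper leaves implicit.
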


The proof of Theorem~\ref{thm:main} is by contradiction. A Baire category argument gives a neighbourhood of some $F$ on which the errors $T_{b^{(j)}}(f)-\deg f$ are eventually uniformly bounded. Proposition~\ref{prop:anchored}, using degree-zero quotients of Blaschke factors and rotation averaging, constructs arbitrarily small degree-preserving perturbations of $F$ for which $T_{b^{(j)}}$ changes by an arbitrarily large amount.

Open Problems~5.7 and~5.8 in~\cite{Brezis2023} reverse the order of quantifiers by allowing the summation process to depend on the individual map. Section~\ref{sec:individual-processes} gives an immediate affirmative answer to~5.7 and hence a negative answer to~5.8.

The restriction $\alpha<1/3$ enters through both the decay $(1-t^2)^{1-3\alpha}$ in~\eqref{eq:S-decay} and the divergence $n^{1-3\alpha}\to\infty$ in~\eqref{eq:growth}. Both become borderline at $\alpha=1/3$. Thus the argument gives no conclusion for $C^{0,1/3}(\Sone;\Sone)$ and is consistent with the positive formula on $W^{1/3,3}(\Sone;\Sone)$ proved in~\cite{Brezis2006}; see also~\cite[Theorem~5.6]{Brezis2023}.

Theorem~\ref{thm:main} concerns linear functionals of the squared Fourier moduli. It does not produce two maps with identical Fourier moduli and different degrees in the function spaces considered here. Thus it settles the summation problem, but not Open Problem~5.5 in~\cite{Brezis2023}, concerning whether the Fourier moduli determine the degree on $W^{1/p,p}$ for $p>3$ or on $C^{0,\alpha}$ for $0<\alpha\le1/3$.

\subsection*{Acknowledgements} The project is partially supported by National Science Centre grant 2022/01/1/ST1/00021 and by IMAI Centre (University of Warsaw). \\ The author thanks his advisor Katarzyna Mazowiecka for drawing his attention to this problem.

\subsection*{Use of generative AI} ChatGPT (OpenAI) was used to generate preliminary drafts of the proofs in this manuscript. The author subsequently verified each argument in detail, revised the proofs where necessary, checked the cited sources, and determined the final formulation of all results. The author takes sole responsibility for the correctness and content of the manuscript.

\section{Preliminaries}

We identify $\Sone$ with the unit circle. For $x,y\in\Sone$ we let $d(x,y)$ denote the geodesic distance. For $0<s,\alpha<1$ and $1<p<\infty$, set
\begin{align*}
  [f]_{C^{0,\alpha}}
  &:=\sup_{z\ne w}\frac{\abs{f(z)-f(w)}}{d(z,w)^\alpha},\\
  [f]_{W^{s,p}}^p
  &:=\int_{\Sone}\int_{\Sone}
  \frac{\abs{f(z)-f(w)}^p}{d(z,w)^{1+sp}}
  \dd\mu(z)\dd\mu(w).
\end{align*}
We equip $C^{0,\alpha}$ with $\norm f_\infty+[f]_{C^{0,\alpha}}$ and
$W^{s,p}$ with $\norm f_{L^p}+[f]_{W^{s,p}}$.

For $0<s<\alpha<1$, $1<p<\infty$ we have
\begin{equation}
  C^{0,\alpha}(\mathbb S^1;\mathbb C)
    \hookrightarrow W^{s,p}(\mathbb S^1;\mathbb C).  
    \label{eq:embedding}
\end{equation} 
Indeed,
\begin{align*}
[f]_{W^{s,p}}^p
&=\int_{\mathbb S^1}\int_{\mathbb S^1}
\frac{|f(z)-f(w)|^p}
     {d(z,w)^{1+sp}}
\dd\mu(z)\dd\mu(w) \\
&\leq
[f]_{C^{0,\alpha}}^p
\int_{\mathbb S^1}\int_{\mathbb S^1}
d(z,w)^{(\alpha-s)p-1}
\dd\mu(z)\dd\mu(w) \\
&\leq
C_{s,p,\alpha}[f]_{C^{0,\alpha}}^p,
\end{align*}
where the last integral is finite because $(\alpha-s)p>0$.

When $sp>1$, the maps
are continuous and $\deg f$ is the ordinary topological degree. When $sp=1$, $W^{s,p}(\Sone)$ embeds into $\operatorname{VMO}(\Sone)$, and
$\deg f$ denotes the Brezis--Nirenberg $\operatorname{VMO}$ degree which agrees with the ordinary degree on continuous maps. In the cases $sp\geq1$ considered below, the degree is locally
constant in the corresponding function-space topology and satisfies
\begin{equation}
  \deg(fg)=\deg f+\deg g,
  \quad \deg(\overline f)=-\deg f,
  \quad \deg(z^nf)=n+\deg f.                              \label{eq:degree-identities}
\end{equation}
For the critical Sobolev case these facts follow from the $\operatorname{VMO}$ degree theory
of~\cite{BrezisNirenberg1995}; see also
\cite[Chapters~5 and~12]{BrezisMironescu2021}.  The metric space $C^{0,\alpha}(\Sone;\Sone)$ is
complete because it is a closed subset of the ambient Banach space $C^{0,\alpha}(\Sone;\C)$.
For $b\in\ell^\infty(\Z;\C)$, Parseval's identity and the Cauchy--Schwarz
inequality give
\begin{equation}
\begin{split}
  \abs{T_b(f)-T_b(g)}
  &\leq \norm b_{\ell^\infty}
  \sum_n\abs{\abs{\widehat f(n)}^2-\abs{\widehat g(n)}^2}\\
  &\leq \norm b_{\ell^\infty}
  (\norm f_2+\norm g_2)\norm{f-g}_2\\
  &\leq \norm b_{\ell^\infty}
  (\norm f_2+\norm g_2)\norm{f-g}_\infty.
\end{split}                                                \label{eq:T-continuity}
\end{equation}
Thus every $T_b$ is continuous on $C^{0,\alpha}(\Sone;\Sone)$.

We shall also use continuity of multiplication. For \(F,G\in C^{0,\alpha}(\Sone;\Sone)\), we have
\[
\begin{aligned}
&F(z)(G(z)-1)-F(w)(G(w)-1)\\
&\qquad=F(z)(G(z)-G(w))
       +(F(z)-F(w))(G(w)-1).
\end{aligned}
\]
Since \(|F|=1\), division by $d(z,w)^\alpha$ and passage to
the supremum yield
\begin{equation}\label{eq:multiplication}
  [F(G-1)]_{C^{0,\alpha}}
  \leq [G-1]_{C^{0,\alpha}}
  +\|G-1\|_\infty[F]_{C^{0,\alpha}}.
\end{equation}

Finally, if \(u\) is Lipschitz on \(\Sone\), then
\begin{equation}\label{eq:holder-interpolation}
  [u]_{C^{0,\alpha}}
  \leq2^{1-\alpha}\|u\|_\infty^{1-\alpha}
  \|u'\|_\infty^\alpha,
\end{equation}
where $u'$ denotes the derivative with respect to the arclength parameter. Indeed,
\begin{equation}\label{eq:pointwise-interpolation}
\begin{split}
         |u(x)-u(y)|
  \leq&\min\{2\|u\|_\infty,\|u'\|_\infty d(x,y)\}\\
  \leq&(2\|u\|_\infty)^{1-\alpha}
       (\|u'\|_\infty d(x,y))^\alpha. 
\end{split}
\end{equation}

The last inequality follows from
\(\min\{A,B\}\leq A^{1-\alpha}B^\alpha\) for \(A,B\geq0\);
division by \(d(x,y)^\alpha\) proves \eqref{eq:holder-interpolation}.

\section{Proof of the main result} \label{main}

Fix $0<\alpha<1/3$. The next lemma is the quantitative part of the argument.

\begin{lemma}\label{lem:test-map}
Let $m=(m_n)_{n\in\Z}$ be a bounded complex-valued sequence satisfying
$m_{-n}=-m_n$, and set
\[
  A(m):=\sup_{n\geq1}\frac{\abs{m_n}}{n^{3\alpha}}.
\]
There are constants $c,C,\eta_0>0$, depending only on $\alpha$, such that, for
every $0<\eta<\eta_0$, there exists $H\in C^\infty(\Sone;\Sone)$ for which
\begin{align}
  \deg H&=0,                                               \label{eq:H-degree}\\
  \norm{H-1}_{C^{0,\alpha}}&\leq C\eta,             \label{eq:H-small}\\
  \abs{\sum_{n\in\Z}m_n\abs{\widehat H(n)}^2}
  &\geq c\eta^3A(m).                                      \label{eq:H-response}
\end{align}
\end{lemma}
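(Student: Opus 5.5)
The plan is to build $H$ as a degree-zero quotient of two Blaschke factors with nearby zeros, tuned to a frequency scale $N$ at which $\abs{m_N}/N^{3\alpha}$ nearly attains $A(m)$, and then to use rotations of the zeros to keep cancellation in $\sum_n m_n\abs{\widehat H(n)}^2$ under control. For $\abs a<1$ let $B_a(z)=(z-a)/(1-\overline a z)$. Put
\[
  H=B_a\overline{B_b},\qquad a=t,\quad b=te^{i\theta},\quad 1-t\simeq 1/N .
\]
Then $H\in C^\infty(\Sone;\Sone)$, and $\deg H=1-1=0$ by~\eqref{eq:degree-identities}, which gives~\eqref{eq:H-degree} for free. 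Since $m$ is bounded, $A(m)<\infty$. Fix $N$ with $\abs{m_N}\geq\tfrac12 A(m)N^{3\alpha}$. We may assume $A(m)>0$, and, using $m_{-n}=-m_n$, that $N\geq1$.

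\textbf{Smallness.} $\abs{H-1}$ is comparable to $\abs{B_a-B_b}$, so on $\Sone$
\[
  \norm{H-1}_\infty\lesssim \frac{\abs{a-b}}{1-t},\qquad \norm{H'}_\infty\lesssim\frac{\abs{a-b}}{(1-t)^2}.
\]
By~\eqref{eq:holder-interpolation},
\[
  [H-1]_{C^{0,\alpha}}\lesssim \frac{\abs{a-b}}{1-t}\,(1-t)^{-\alpha}.
\]
Choosing $\abs{a-b}\simeq\theta\simeq\eta(1-t)^{1+\alpha}$, i.e.\ $\theta\simeq\eta N^{-1-\alpha}$, yields~\eqref{eq:H-small}. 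The restriction $\eta<\eta_0$ keeps $\abs{a-b}\ll 1-t$, so the linearisation below is valid. If this choice turns out to lose a power, I would keep $\theta$ and $1-t$ as free parameters, linked to $\eta$ and $N$ only at the end.

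\textbf{Fourier side.} Expand $\widehat H(n)$ to first order in $a-b$ using $\widehat{B_a}(0)=-a$ and $\widehat{B_a}(k)=(1-\abs a^2)\overline a^{\,k-1}$ for $k\geq1$. This gives
\[
  \abs{\widehat H(n)}\approx \eta (1-t)^{\alpha}\,\omega(n(1-t))\,t^{\abs n},
\]
with an explicit profile $\omega$ and exponentially decaying tails beyond $\abs n\gg N$. The error terms in this expansion are quadratic in $\eta$. Inserting this into $\sum_n m_n\abs{\widehat H(n)}^2$ and using $\abs{m_n}\leq A(m)n^{3\alpha}$ bounds all contributions outside the band $n\sim N$ by the tail sums, and this is where $3\alpha<1$ is used. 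The band itself is an average of $m_n$ against a nonnegative kernel of size about $\eta^2N^{1-2\alpha}$.

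\textbf{Main obstacle.} The hard step is a lower bound, not an upper bound. Since the $m_n$ may oscillate in sign and phase, a single smooth kernel can average $m_n$ to almost zero. So I will use a family $H_{t,\theta,\rho}$, rotating $b$ relative to $a$ and varying $t$ over a bounded range of scales $1-t\in[\tfrac1{2N},\tfrac2N]$. The response is linear in $m$, namely $\sum_n m_n w_n(t,\theta,\rho)$ with explicit weights $w_n$. I will show that a suitable finite signed combination of these weights, with bounded coefficients, concentrates near $n=N$. One route is to differentiate in $t$, which multiplies the weights by polynomials in $n$, or to use rotation averaging to produce a Fejér-type localisation. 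This gives
\[
  \max_{\text{family}}\abs{\textstyle\sum_n m_n w_n}\gtrsim \eta^2\cdot(\text{loss})\cdot\abs{m_N}N^{-3\alpha}.
\]
The loss comes from separating $m_N$ from its neighbours, and I expect it to cost at most one further factor of $\eta$. That is why I aim only for $c\eta^3A(m)$ in~\eqref{eq:H-response}, not $\eta^2$: the extra $\eta$ absorbs the quadratic linearisation error and the imperfect localisation. If localising to the single frequency $N$ fails, the fallback is to choose $N$ to maximise a dyadic block average of $\abs{m_n}n^{-3\alpha}$ and localise only to that block.
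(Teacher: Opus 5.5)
There is a genuine gap, and the family you build cannot produce the lower bound. For any quotient $H=B_a\overline{B_b}$ the moduli $\abs{\widehat H(n)}$ can be computed exactly. Write $B_a=e^{i\phi}B_c\circ B_b$ with $c=B_b(a)$, and split $B_c(w)/w=-c\,\overline w+(1-\abs c^2)/(1-\overline c w)$ on $\Sone$. Then, for $k\geq1$,
\[
  \abs{\widehat H(k)}^2=\abs c^2(1-\abs a^2)^2\abs a^{2k-2},
  \qquad
  \abs{\widehat H(-k)}^2=\abs c^2(1-\abs b^2)^2\abs b^{2k-2}.
\]
Because $m$ is odd, only $\abs{\widehat H(n)}^2-\abs{\widehat H(-n)}^2$ contributes. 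Hence $\sum_n m_n\abs{\widehat H(n)}^2=\abs c^2\bigl(S(\abs a)-S(\abs b)\bigr)$, where $S(t)=(1-t^2)^2\sum_{k\geq1}m_kt^{2k-2}$.

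With your choice $\abs a=\abs b=t$, this vanishes identically in $\theta$. Neither rotating the zeros nor rotating $H$ ever changes $\abs{\widehat H(n)}$. Your ``nonnegative kernel of size $\eta^2\cdots$'' on the band $n\sim N$ is exactly the even part that oddness annihilates.

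Allowing $\abs a\neq\abs b$ does not help. $S(t)$ is an Abel-type mean of $m_k$ over $k\lesssim(1-t)^{-1}$ with smooth positive weights, and such means cannot isolate one frequency. Take $m_k=(-1)^kk^{3\alpha}\psi(k/N)$ for $k\geq1$, extended oddly, with $\psi$ a smooth bump supported in $(1,2)$ and $\max\psi=1$. Then $A(m)\geq\frac12$ for large $N$, yet Poisson summation gives $\sup_{0\leq t<1}\abs{S(t)}=O(N^{3\alpha-1})$. Since $\abs c\leq1$, every map in your family has response at most $2\sup_t\abs{S(t)}=o(1)$. This holds however the map is selected, whether by signed combinations, $t$-derivatives or rotation averages. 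It violates the required bound $c\eta^3A(m)$. The dyadic-block fallback fails for the same reason: large block averages of $\abs{m_n}$ do not prevent cancellation in signed smooth averages.

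The missing idea is frequency dilation. The paper takes $H(z)=h_{a,b}(z^K)$ with real zeros $0\leq b<a\leq t_*$, where $\abs{m_K}\geq\frac12A(m)K^{3\alpha}$. The spectrum of $H$ then lies in $K\Z$, and the generating function becomes $S(t)=(1-t^2)^2\sum_{\ell\geq1}m_{K\ell}t^{2\ell-2}$. So $S(0)=m_K$ is an exact point evaluation. Jensen's inequality, using $3\alpha<1$, gives $\abs{S(t_*)}\leq\abs{m_K}/4$ for a $t_*$ depending only on $\alpha$.

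Now telescope along the hyperbolically uniform partition $t_j=\tanh(j\Delta)$ of $[0,t_*]$, with $\Delta\approx\delta=\eta K^{-\alpha}$. This yields one step with $\abs{S(t_{j+1})-S(t_j)}\gtrsim\abs{m_K}\delta$ and $\rho=\tanh\Delta\approx\delta$. The response is then $\rho^2\abs{S(t_{j+1})-S(t_j)}\gtrsim\abs{m_K}\delta^3\gtrsim\eta^3A(m)$, while $\norm{H-1}_{C^{0,\alpha}}\lesssim K^\alpha\rho\lesssim\eta$. So the cube is structural: two powers come from $\rho^2$ and one from the telescoping step, not from a localisation loss.

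Your smallness computation, i.e.\ the scaling $\abs c\approx\eta N^{-\alpha}$, is sound and matches the paper's $\delta=\eta K^{-\alpha}$. It is the Fourier side that has to be replaced.
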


\begin{proof}
If $A(m)=0$, take $H=1$.  Otherwise choose $K\in\mathbb N$ such that, with
$M:=\abs{m_K}$,
\begin{equation}
  M\geq\frac12A(m)K^{3\alpha}.                                 \label{eq:choose-K}
\end{equation}
Then
\begin{equation}
  \abs{m_{K\ell}}\leq2M\ell^{3\alpha},
  \qquad \ell\geq1.                                      \label{eq:subsequence-bound}
\end{equation}

For $0\leq a,b<1$, define
\[
  B_a(z):=\frac{z-a}{1-az},
  \qquad h_{a,b}(z):=\frac{B_a(z)}{B_b(z)},
  \qquad \rho:=\frac{a-b}{1-ab}.
\]
Both $B_a$ and $B_b$ have degree one on $\Sone$.  A Laurent expansion gives,
for $\ell\geq1$,
\begin{align}
  \widehat{h_{a,b}}(\ell)
  &=\rho(1-a^2)a^{\ell-1},                                \label{eq:positive-coefficients}  \\
  \widehat{h_{a,b}}(-\ell)
  &=-\rho(1-b^2)b^{\ell-1} \qquad \text{for } b>0.                               \label{eq:negative-coefficients} 
\end{align}
For b=0 we have $\widehat{h_{a,0}}(-1)=-\rho$ and $\widehat{h_{a,0}}(-\ell)=0$ for $\ell\ge2$.

Set $H(z)=h_{a,b}(z^K)$.  Then $H$ is smooth, unimodular, and
\[\deg H=K(\deg B_a-\deg B_b)=0.\] Since the nonzero Fourier coefficients of
$H$ have indices divisible by $K$, oddness of $m$ and
\eqref{eq:positive-coefficients}--\eqref{eq:negative-coefficients} yield
\begin{equation}
  \sum_{n\in\Z}m_n\abs{\widehat H(n)}^2
  =\rho^2\bigl(S(a)-S(b)\bigr),                            \label{eq:exact-response}
\end{equation}
where
\[
  S(t):=(1-t^2)^2\sum_{\ell\geq1}m_{K\ell}t^{2\ell-2},
  \qquad S(0)=m_K.
\]
Since $3\alpha<1$ and $\sum t^{2\ell-2}(1-t^2)=1$, Jensen's inequality
gives
\[
  (1-t^2)\sum_{\ell\geq1}\ell^{3\alpha} t^{2\ell-2}\le\big(\sum_{\ell\geq1}\ell t^{2\ell-2}(1-t^2)\big)^{3\alpha}=(1-t^2)^{-3\alpha}.
\]
Thus by~\eqref{eq:subsequence-bound}
\begin{equation}
      \abs{S(t)}
  \leq2M(1-t^2)^2\sum_{\ell\geq1}\ell^{3\alpha} t^{2\ell-2}\le2M(1-t^2)^{1-3\alpha},
  \qquad 0\leq t<1.  \label{eq:S-decay}
\end{equation}
Choose $t_*\in(0,1)$, depending only on $\alpha$, so that
$(1-t_*^2)^{1-3\alpha}\leq1/8$.  It follows that
\begin{equation}
  \abs{S(0)-S(t_*)}\geq\frac34M.                          \label{eq:S-drop}
\end{equation}

Let $L=\operatorname{arctanh}t_*$, $\eta_0=\min\{L,\frac 1 2\}$. Fix $0<\eta<\eta_0$ and let $\delta=\eta K^{-\alpha}$. We have $0<\delta\leq\eta\leq\min\{L,1/2\}$.  Choose
\[
  J=\left\lceil\frac L\delta\right\rceil,
  \qquad \Delta=\frac LJ,
  \qquad t_j=\tanh(j\Delta),\quad 0\leq j\leq J.
\]
Then $\delta/2\leq\Delta\leq\delta$ and
$\delta/4\leq\tanh\Delta\leq\delta$.  By telescoping
\eqref{eq:S-drop}, for some $j\in\{0,\ldots,J-1\}$,
\begin{equation}
  \abs{S(t_{j+1})-S(t_j)}
  \geq\frac{3M}{4J}\geq\frac{3M\delta}{8L}.               \label{eq:one-step}
\end{equation}
Take $a=t_{j+1}$ and $b=t_j$.  The subtraction formula for $\tanh$ gives
$\rho=\tanh\Delta$, and hence~\eqref{eq:exact-response} and
\eqref{eq:one-step} imply
\[
  \abs{\sum_nm_n\abs{\widehat H(n)}^2}
  \geq c_\alpha M\delta^3
  \geq c_\alpha \eta^3A(m),
\]
where the last inequality follows from~\eqref{eq:choose-K}.

It remains to estimate $H-1$. The identity $B_a=B_\rho\circ B_b$ gives
\[h_{a,b}=k_\rho\circ B_b,\qquad k_\rho(z):=\frac{B_\rho(z)}z,\qquad k_\rho(z)-1=\frac{\rho(z-z^{-1})}{1-\rho z}\quad(z\in\Sone).\] 
Since $\abs{\rho}\leq\delta\leq1/2$, we have
\[
  \abs{1-\rho z}\geq1-\abs{\rho}\geq\frac12
  \qquad (z\in\Sone).
\]
Thus,
\begin{equation}
  \norm{k_\rho-1}_\infty
  \leq4\abs{\rho}.
  \label{eq:k-rho-uniform}
\end{equation}

Writing $z=e^{it}$ and differentiating gives

\[
  \frac{\dd}{\dd t}k_\rho(e^{it})
  =
  \frac{
    i\rho(e^{it}+e^{-it})(1-\rho e^{it})
    +i\rho^2e^{it}(e^{it}-e^{-it})
  }{(1-\rho e^{it})^2}.
\]
Using $\abs{\rho}\leq1/2$ and
$\abs{1-\rho e^{it}}\geq1/2$, we obtain
\begin{equation}
  \norm{\frac{\dd}{\dd t}(k_\rho(e^{it})-1)}_\infty
  \leq C\abs{\rho},
  \label{eq:k-rho-derivative}
\end{equation}
For $z=e^{it}$, direct differentiation of the Blaschke factor gives
\[
  \abs{\frac{\dd}{\dd t}B_b(e^{it})}
  =
  \frac{1-b^2}{\abs{e^{it}-b}^2}
  \leq
  \frac{1+b}{1-b}.
\]
Since $0\leq b\leq t_*<1$, it follows that
\[
  \sup_{t\in\mathbb R}
  \abs{\frac{\dd}{\dd t}B_b(e^{it})}
  \leq
  \frac{1+t_*}{1-t_*}.
\]
Combining this estimate with
\eqref{eq:k-rho-uniform}--\eqref{eq:k-rho-derivative} and the chain rule,
we obtain
\begin{equation}
  \norm{h_{a,b}-1}_\infty
  +
  \norm{(h_{a,b}-1)'}_\infty
  \leq C_\alpha\abs{\rho}.
  \label{eq:h-rho-estimate}
\end{equation}

Since $H(z)=h_{a,b}(z^K)$, we consequently have
\[
  \norm{H-1}_\infty
  \leq C_\alpha\abs{\rho},
  \qquad
  \norm{(H-1)'}_\infty
  \leq C_\alpha K\abs{\rho}.
\]
Applying the interpolation estimate
\eqref{eq:holder-interpolation} to $H-1$ gives
\begin{align*}
  [H-1]_{C^{0,\alpha}}
  &\leq
  2^{1-\alpha}
  \norm{H-1}_\infty^{1-\alpha}
  \norm{(H-1)'}_\infty^\alpha\\
  &\leq
  C_\alpha K^\alpha\abs{\rho}.
\end{align*}
Since $K\geq1$, this also yields
\[
  \norm{H-1}_{C^{0,\alpha}}
  =
  \norm{H-1}_\infty+[H-1]_{C^{0,\alpha}}
  \leq
  C_\alpha K^\alpha\abs{\rho}.
\]
Finally, $\abs{\rho}=\tanh\Delta\leq\delta$ and
$\delta=\eta K^{-\alpha}$, so
\[
  \norm{H-1}_{C^{0,\alpha}}
  \leq
  C_\alpha K^\alpha\delta
  =
  C_\alpha\eta.
\]
This proves~\eqref{eq:H-small} and completes the proof.
\end{proof}

Rotation averaging allows us to use Lemma~\ref{lem:test-map} near an
arbitrary map.

\begin{proposition}\label{prop:anchored}
For every $F\in C^{0,\alpha} (\Sone;\Sone) $ there are constants $c_F,r_F>0$ such that, for every
$b\in\ell^\infty(\Z;\C)$ and every $0<r<r_F$,
\begin{equation}
\begin{split}
  &\sup_{\substack{G\in C^\infty(\Sone;\Sone),\ \deg G=0\\
                    \norm{FG-F}_{C^{0,\alpha}}<r}}
  \abs{T_b(FG)-T_b(F)}\\
  &\hspace{25mm}\geq c_Fr^3
  \sup_{n\geq1}\frac{
  \abs{T_b(z^nF)-T_b(z^{-n}F)}}{n^{3\alpha}}.
\end{split}                                                \label{eq:anchored}
\end{equation}
\end{proposition}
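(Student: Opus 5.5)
The plan is to reduce \eqref{eq:anchored} to Lemma~\ref{lem:test-map} by averaging over rotations of the test map $H$. The averaging should turn $T_b(FG)-T_b(F)$ into a weighted sum of the differences $T_b(z^kF)-T_b(F)$. An additional reflection of $H$ is then meant to make the relevant weight sequence odd, which is the form Lemma~\ref{lem:test-map} requires.

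\emph{Step 1 (averaging identity).} For $G\in C^\infty(\Sone;\Sone)$ and $\theta\in\mathbb R$ set $G_\theta(z):=G(e^{-i\theta}z)$, so that $\widehat{G_\theta}(k)=e^{-ik\theta}\widehat G(k)$. Then
\[
\widehat{FG_\theta}(n)=\sum_k\widehat F(n-k)\widehat G(k)e^{-ik\theta}.
\]
Averaging $\abs{\widehat{FG_\theta}(n)}^2$ over $\theta\in[0,2\pi]$ kills the cross terms and gives $\sum_k\abs{\widehat F(n-k)}^2\abs{\widehat G(k)}^2$. The series converge absolutely, since $\widehat G$ decays rapidly and $\widehat F\in\ell^2$, so we may multiply by $b_n$ and sum. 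Using $\sum_k\abs{\widehat G(k)}^2=\norm G_2^2=1$, we obtain
\[
\frac1{2\pi}\int_0^{2\pi}\bigl(T_b(FG_\theta)-T_b(F)\bigr)\dd\theta
=\sum_k\abs{\widehat G(k)}^2 e_k,\qquad e_k:=T_b(z^kF)-T_b(F).
\]

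\emph{Step 2 (odd part via reflection).} Let $\widetilde G(z):=G(\overline z)$. Then $\abs{\widehat{\widetilde G}(k)}=\abs{\widehat G(-k)}$, $\deg\widetilde G=-\deg G$, and the $C^{0,\alpha}$ norms of $\widetilde G-1$ and $G-1$ agree. Subtracting the averaged identity for $\widetilde G$ from that for $G$ gives
\[
\sum_k\bigl(\abs{\widehat G(k)}^2-\abs{\widehat G(-k)}^2\bigr)e_k=\sum_k m_k\abs{\widehat G(k)}^2,
\qquad m_k:=e_k-e_{-k}=T_b(z^kF)-T_b(z^{-k}F).
\]
This sequence satisfies $m_{-k}=-m_k$ and $\abs{m_k}\le2\norm b_{\ell^\infty}$, so it is bounded and odd. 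Now apply Lemma~\ref{lem:test-map} to this $m$ and a parameter $\eta<\eta_0$ to get $H$, and take $G=H$. One of the two averages, for $H$ or for $\widetilde H$, has modulus at least $\tfrac12 c\eta^3A(m)$. Hence some rotation $G^\ast\in\{H_\theta,\widetilde H_\theta\}$ satisfies
\[
\abs{T_b(FG^\ast)-T_b(F)}\ge\tfrac c2\eta^3A(m).
\]
Here $A(m)$ is exactly the supremum on the right of \eqref{eq:anchored}. Rotations preserve degree and norms, so $\deg G^\ast=0$ and $G^\ast$ is smooth.

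\emph{Step 3 (smallness and choice of constants).} By \eqref{eq:multiplication} and $\abs F=1$,
\[
\norm{FG^\ast-F}_{C^{0,\alpha}}\le\norm{G^\ast-1}_\infty+[G^\ast-1]_{C^{0,\alpha}}+\norm{G^\ast-1}_\infty[F]_{C^{0,\alpha}}\le C\eta\,(1+[F]_{C^{0,\alpha}}),
\]
using \eqref{eq:H-small}. Given $r$, choose $\eta:=r/\bigl(2C(1+[F]_{C^{0,\alpha}})\bigr)$. This is admissible provided $r<r_F:=2C(1+[F]_{C^{0,\alpha}})\eta_0$, and it makes $FG^\ast$ an admissible competitor in the supremum. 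The inequality then holds with $c_F:=\tfrac c2\bigl(2C(1+[F]_{C^{0,\alpha}})\bigr)^{-3}$.

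The main obstacle is Step 1: the rotation averaging must produce exactly the diagonal weights $\abs{\widehat G(k)}^2$, with the interchange of sums and integral justified. Step 2 is what recovers the odd structure demanded by Lemma~\ref{lem:test-map}; the remaining steps are bookkeeping.
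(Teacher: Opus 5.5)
Your proposal is correct and follows essentially the paper's argument: rotation averaging to get the weights $\abs{\widehat H(k)}^2$, a symmetry sending $\abs{\widehat H(k)}$ to $\abs{\widehat H(-k)}$ to extract the odd sequence for Lemma~\ref{lem:test-map}, then the multiplication estimate to choose $\eta\sim r$. The only cosmetic differences are that you use the reflection $H(\overline z)$ where the paper uses the conjugate $\overline{H(e^{i\theta}z)}$ (both have the same coefficient moduli, degree and norms), and that you do not halve $m$, so the factor $\tfrac12$ appears at a different step.
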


\begin{proof}
Define the bounded complex odd sequence
\[
  m_n:=\frac{T_b(z^nF)-T_b(z^{-n}F)}2,
  \qquad n\in\Z.
\]
Boundedness follows from $\abs{T_b(z^nF)}\leq\norm b_{\ell^\infty}$.
Apply Lemma~\ref{lem:test-map}, with a parameter $\eta>0$ to be fixed, to
obtain a smooth degree-zero map $H$ satisfying
\begin{equation}
  \norm{H-1}_{C^{0,\alpha}}\leq C\eta,
  \qquad
  \abs{\sum_nm_n\abs{\widehat H(n)}^2}
  \geq c\eta^3\sup_{n\geq1}\frac{\abs{m_n}}{n^{3\alpha}}.       \label{eq:lemma-applied}
\end{equation}

For $\theta\in[0,2\pi]$, set
\[
  H_\theta^+(z):=H(e^{i\theta}z),
  \qquad
  H_\theta^-(z):=\overline{H(e^{i\theta}z)}.
\]
We claim that
\begin{equation}
\begin{split}
  &\frac1{2\pi}\int_0^{2\pi}
  \bigl(T_b(FH_\theta^\pm)-T_b(F)\bigr)\dd\theta\\
  &\qquad =
  \sum_{n\in\Z}\abs{\widehat H(n)}^2
  \bigl(T_b(z^{\pm n}F)-T_b(F)\bigr).
\end{split}
\label{eq:rotation-average}
\end{equation}

Indeed, we have 
\[
  \widehat{FH_\theta^+}(k)
  =
  \sum_{n\in\Z}
  \widehat H(n)e^{in\theta}\widehat F(k-n).
\]
Orthogonality in the variable $\theta$ therefore gives
\[
  \frac1{2\pi}\int_0^{2\pi}
  \abs{\widehat{FH_\theta^+}(k)}^2\dd\theta
  =
  \sum_{n\in\Z}
  \abs{\widehat H(n)}^2
  \abs{\widehat F(k-n)}^2.
\]
Multiplying by $b_k$, summing over $k$, and reindexing, we obtain
\begin{align*}
  \frac1{2\pi}\int_0^{2\pi}T_b(FH_\theta^+)\dd\theta
  &=
  \sum_{k,n\in\Z}
  b_k\abs{\widehat H(n)}^2
  \abs{\widehat F(k-n)}^2\\
  &=
  \sum_{n\in\Z}
  \abs{\widehat H(n)}^2T_b(z^nF).
\end{align*}
The interchange of the integral and the sums is justified by
\begin{align*}
  &\sum_{k,n\in\Z}
  \abs{b_k}\abs{\widehat H(n)}^2
  \abs{\widehat F(k-n)}^2\\
  &\qquad\leq
  \norm b_{\ell^\infty}
  \sum_{n\in\Z}\abs{\widehat H(n)}^2
  \sum_{k\in\Z}\abs{\widehat F(k-n)}^2\\
  &\qquad=
  \norm b_{\ell^\infty}
  \norm H_2^2\norm F_2^2
  =
  \norm b_{\ell^\infty}.
\end{align*}

For the minus sign, observe that
\[
  H_\theta^-(z)
  =
  \sum_{n\in\Z}
  \overline{\widehat H(n)}e^{-in\theta}z^{-n},
\]
and hence
\[
  \widehat{FH_\theta^-}(k)
  =
  \sum_{n\in\Z}
  \overline{\widehat H(n)}e^{-in\theta}
  \widehat F(k+n).
\]
The same orthogonality argument yields
\[
  \frac1{2\pi}\int_0^{2\pi}T_b(FH_\theta^-)\dd\theta
  =
  \sum_{n\in\Z}
  \abs{\widehat H(n)}^2T_b(z^{-n}F).
\]
Finally, Parseval's identity gives
\[
  \sum_{n\in\Z}\abs{\widehat H(n)}^2
  =
  \norm H_2^2
  =
  1.
\]
Subtracting $T_b(F)$ proves~\eqref{eq:rotation-average}.

Set
\[
  A_\pm:=
  \frac1{2\pi}\int_0^{2\pi}
  \bigl(T_b(FH_\theta^\pm)-T_b(F)\bigr)\dd\theta.
\]
By~\eqref{eq:rotation-average} and the definition of $m_n$,
\[
  \frac{A_+-A_-}{2}
  =
  \sum_{n\in\Z}m_n\abs{\widehat H(n)}^2.
\]
It follows that
\[
  \max\{\abs{A_+},\abs{A_-}\}
  \geq
  \abs{\sum_{n\in\Z}
  m_n\abs{\widehat H(n)}^2}.
\]
On the other hand,
\[
  \abs{A_\pm}
  \leq
  \sup_{\theta\in[0,2\pi]}
  \abs{T_b(FH_\theta^\pm)-T_b(F)}.
\]
The function of $\theta$ inside the supremum is continuous. Therefore,
for some $\theta\in[0,2\pi]$ and some
\[
  G\in\{H_\theta^+,H_\theta^-\},
\]
we have
\begin{equation}
  \abs{T_b(FG)-T_b(F)}
  \geq
  c\eta^3
  \sup_{n\geq1}\frac{\abs{m_n}}{n^{3\alpha}}.
  \label{eq:anchored-response}
\end{equation}

Rotation and conjugation preserve the norms of $H-1$.  Thus the
multiplication estimate~\eqref{eq:multiplication} together with \eqref{eq:lemma-applied} gives
$\norm{FG-F}_{C^{0,\alpha}}\leq C_F\eta$.  Moreover, $\deg G=0$ and therefore
$\deg(FG)=\deg F$ by~\eqref{eq:degree-identities}.  Choose
$\eta=r/(2C_F)$ and $r_F$ small enough so that Lemma~\ref{lem:test-map} applies.
Since $\abs{m_n}$ is one half of the numerator in~\eqref{eq:anchored}, the
factor $1/2$ can be absorbed into $c_F$.  This proves the proposition.
\end{proof}

\subsubsection*{Proof of Theorem~\ref{thm:main}}

Write $T_j=T_{b^{(j)}}$.  Suppose, contrary to~\eqref{eq:main}, that there
is $R<\infty$ such that
\begin{equation}
  \limsup_{j\to\infty}\abs{T_j(f)-\deg f}\leq R
  \qquad\text{for every }f\in  C^{0,\alpha}(\Sone;\Sone).                           \label{eq:bounded-error}
\end{equation}
For $N\geq1$, let
\[
  E_N:=\left\{f\in C^{0,\alpha}(\Sone;\Sone):
  \abs{T_j(f)-\deg f}\leq R+1\text{ for every }j\geq N\right\}.
\]
Each $E_N$ is closed by~\eqref{eq:T-continuity} and the continuity of the
degree.  Equation~\eqref{eq:bounded-error} gives $C^{0,\alpha}(\Sone;\Sone)=\bigcup_NE_N$.  Since
$C^{0,\alpha}(\Sone;\Sone)$ is complete, the Baire category theorem yields $F\in C^{0,\alpha}(\Sone;\Sone)$, $r>0$, and
$N\geq1$ such that
\begin{equation}
  \{f\in C^{0,\alpha}(\Sone;\Sone):\norm{f-F}_{C^{0,\alpha}}<r\}\subset E_N.                   \label{eq:baire-ball}
\end{equation}

For $n\in\Z$, set
\[
  m_n^{(j)}:=\frac{T_j(z^nF)-T_j(z^{-n}F)}2.
\]
For every fixed $n\geq1$,~\eqref{eq:bounded-error} and
$\deg(z^{\pm n}F)=\deg F\pm n$ imply, for all sufficiently large $j$,
\[
  \abs{m_n^{(j)}-n}\leq R+1.
\]
Since $1-3\alpha>0$, it follows that
\begin{equation}
  \sup_{n\geq1}\frac{\abs{m_n^{(j)}}}{n^{3\alpha}}
  \longrightarrow\infty.                                 \label{eq:growth}
\end{equation}
Indeed, given $L>0$, first choose a fixed $n$ such that
$(n-R-1)/n^{3\alpha}>L$, and then take $j$ sufficiently large for $|m_n^{(j)}|\ge n-R-1$.

Apply Proposition~\ref{prop:anchored} to $T_j$ at the fixed map $F$, with
the fixed radius $r_*:=\frac12\min\{r,r_F\}$.  By~\eqref{eq:growth}, there
are smooth maps $G_j:\Sone\to\Sone$ with $\deg G_j=0$ and
$\norm{FG_j-F}_{C^{0,\alpha}}<r_*$ such that
\begin{equation}
  \abs{T_j(FG_j)-T_j(F)}\ge\frac 1 2 c_F r_*^3\sup_{n\geq1}\frac{\abs{m_n^{(j)}}}{n^{3\alpha}}
  \longrightarrow\infty.             \label{eq:local-divergence}
\end{equation}

Both $F$ and $FG_j$ belong to the ball in~\eqref{eq:baire-ball}, and
$\deg(FG_j)=\deg F$.  Hence, for every $j\geq N$,
\[
\begin{split}
  \abs{T_j(FG_j)-T_j(F)}
  &\leq\abs{T_j(FG_j)-\deg F}
       +\abs{T_j(F)-\deg F}\\
  &\leq2(R+1),
\end{split}
\]
contradicting~\eqref{eq:local-divergence}.  This proves
Theorem~\ref{thm:main}. \qed

\section{Consequences for Brezis's problems}

For a summation process $\sigma$ satisfying
\eqref{eq:summation-1}--\eqref{eq:summation-2}, set
\[
  Q_\e^\sigma(f):=
  \sum_{n\in\Z}n\sigma_{n,\e}\abs{\widehat f(n)}^2.
\]
The series is absolutely convergent for every $f\in L^2(\Sone)$ by
\eqref{eq:summation-1}.

\begin{proof}[Proof of Corollary~\ref{cor:brezis}]
If~\eqref{eq:recovery} held for every $f\in C^{0,\alpha}(\Sone;\Sone)$, choose any sequence
$\e_j\rightarrow 0^+$ and set $b_n^{(j)}=n\sigma_{n,\e_j}$.  Condition
\eqref{eq:summation-1} gives a sequence $(b^{(j)})_{j\in\mathbb N}\subset\ell^\infty(\mathbb Z;\mathbb C)$, while
Theorem~\ref{thm:main} excludes pointwise convergence of
$T_{b^{(j)}}=Q_{\e_j}^\sigma$ to the degree. The contradiction proves the corollary for $C^{0,\alpha}$. For $W^{1/p,p}$ choose $\alpha$ such that $1/p<\alpha<1/3$, then the result follows from the embedding \[  C^{0,\alpha}(\mathbb S^1;\mathbb C)
    \hookrightarrow W^{1/p,p}(\mathbb S^1;\mathbb C)\]
given by~\eqref{eq:embedding}.
\end{proof}

\subsection*{Map-dependent summation processes}\label{sec:individual-processes}

Open Problem~5.7 in~\cite{Brezis2023} asks whether, for each individual
$f\in C(\Sone;\Sone)$, or more generally $f\in\operatorname{VMO}(\Sone;\Sone)$,
one may choose a summation process depending on $f$ that recovers its degree.
Open Problem~5.8 asks whether there is one map for which every summation
process fails.  Under the literal definition
\eqref{eq:summation-1}--\eqref{eq:summation-2}, the proofs are elementary. 

Since the summation process provided by this argument explicitly depends on the value of $\deg f$, the construction below should be understood as an
observation about the breadth of the definition of summation process, rather than as the construction of a classical summability method.

\begin{proposition}\label{prop:map-dependent}
Let $f\in L^2(\Sone)$ have infinite Fourier support.  For every $L\in\mathbb R$
there is a summation process, depending on $f$ and $L$, such that
\[
  Q_\e^\sigma(f)=L \qquad\text{for every }0<\e<1.
\]
Consequently, Open Problem~5.7 has an affirmative answer and Open
Problem~5.8 has a negative answer, both for continuous and for $\operatorname{VMO}$ maps.
\end{proposition}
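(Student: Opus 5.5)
Since $f$ has infinite Fourier support, the set $\{n\neq0:\widehat f(n)\neq0\}$ is infinite. We construct a process $\sigma_{n,\e}$, independent of $\e$ except at one index, such that $Q_\e^\sigma(f)=L$ exactly.

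Fix an index $n_0\neq0$ with $\widehat f(n_0)\neq0$. For $n\neq n_0$ we keep $\sigma_{n,\e}=1$ when $\abs n\le N_\e$ and set $\sigma_{n,\e}=0$ when $\abs n>N_\e$. Here $N_\e\to\infty$ as $\e\to0^+$, say $N_\e=\lfloor 1/\e\rfloor$, and we also require $N_\e\ge\abs{n_0}$. With this truncation, condition~\eqref{eq:summation-1} holds for $n\neq n_0$ with bound $N_\e$. The truncated sum
\[
  P_\e:=\sum_{0<\abs n\le N_\e,\ n\neq n_0}n\abs{\widehat f(n)}^2
\]
is a finite real number. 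At the index $n_0$ we set
\[
  \sigma_{n_0,\e}:=\frac{L-P_\e}{n_0\abs{\widehat f(n_0)}^2}.
\]
This gives $Q_\e^\sigma(f)=L$ identically, and $\sigma_{0,\e}$ is irrelevant, so we take it equal to $1$. Condition~\eqref{eq:summation-1} still holds because only one extra finite value has been added.

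The one real issue is condition~\eqref{eq:summation-2} at $n_0$. We need $\sigma_{n_0,\e}\to1$, which means $P_\e\to L-n_0\abs{\widehat f(n_0)}^2$. The symmetric partial sums need not converge, so instead of cutting at $\abs n\le N_\e$ we use the infinitude of the support to make the truncation set $S_\e$ depend on $f$. Note that this freedom is purely combinatorial: only $\sigma_{n,\e}\to1$ for each fixed $n$ is required, with no monotonicity or regularity.

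Let $c:=L-n_0\abs{\widehat f(n_0)}^2$. For each $\e$ we choose a finite set $S_\e\ni n_0$ with $S_\e\supset\{\abs n\le N_\e\}$, and we aim for
\[
  \sum_{n\in S_\e\setminus\{n_0\}}n\abs{\widehat f(n)}^2\to c .
\]
The terms $n\abs{\widehat f(n)}^2$ need not be small, so we do not add whole terms for large $n$. Instead we allow a factor $\sigma_{n,\e}\in[0,1]$, or even outside $[0,1]$, at indices $n$ with $\abs n>N_\e$. Since the support is infinite, pick $n_\e$ with $\abs{n_\e}>N_\e$ and $\widehat f(n_\e)\neq0$, and set
\[
  \sigma_{n_\e,\e}:=\frac{c-P_\e}{n_\e\abs{\widehat f(n_\e)}^2}.
\]
All other indices with $\abs n>N_\e$ get $\sigma_{n,\e}=0$. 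Then $\sigma_{n_0,\e}=1$ exactly, and the total sum equals $L$.

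Now fix $n$. For $\e$ small enough that $N_\e\ge\abs n$, the index $n$ lies in the fully kept range, so $\sigma_{n,\e}=1$ eventually; the index $n_\e$ escapes to infinity and never affects a fixed $n$. Each $\sigma_{\cdot,\e}$ has finite support, so~\eqref{eq:summation-1} holds.

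For the consequences, take $f\in\operatorname{VMO}(\Sone;\Sone)$, or $f$ continuous, and $L=\deg f$. If $f$ has finite Fourier support, then $f$ is a trigonometric polynomial, hence lies in $H^{1/2}$, and $\sigma\equiv1$ works by~\eqref{eq:degree-formula}. This gives a positive answer to Open Problem~5.7 and therefore a negative answer to 5.8.

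The main obstacle is meeting~\eqref{eq:summation-2} when the partial sums do not converge. This is handled by letting the correcting index $n_\e$ escape to infinity.
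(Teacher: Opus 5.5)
Your final construction is the paper's: $\sigma_{n,\e}=1$ for $\abs n\le N_\e$, a single correcting weight at an escaping index $n_\e$ with $\abs{n_\e}>N_\e$ and $\widehat f(n_\e)\neq0$, and $0$ elsewhere. The fixed index $n_0$ is redundant, since $c-P_\e=L-\sum_{\abs n\le N_\e}n\abs{\widehat f(n)}^2$, which is exactly the paper's $L-A_N$. Dropping $n_0$ also removes the requirement $N_\e\ge\abs{n_0}$, which fails for $\e$ near $1$ when $N_\e=\lfloor 1/\e\rfloor$.

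There is one slip, in the finite-support case. The choice $\sigma\equiv1$ is not a summation process, because \eqref{eq:summation-1} would require $\sup_n\abs{n}<\infty$. The fix is immediate. For such $f$ the series $Q_\e^\sigma(f)$ has finitely many terms. Hence any admissible process, for instance $\sigma_{n,\e}=1$ for $\abs n\le N_\e$ and $0$ otherwise, gives $Q_\e^\sigma(f)\to\sum_n n\abs{\widehat f(n)}^2=\deg f$ by \eqref{eq:summation-2} and \eqref{eq:degree-formula}. The paper reaches the same conclusion by showing that a unimodular $f$ with finite Fourier support must be a monomial $\widehat f(m)z^m$.
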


\begin{proof}
Let $a_n=\widehat f(n)$.  For every $N\geq1$, choose $k_N\in\Z$ such that
$\abs{k_N}>N$ and $a_{k_N}\ne0$, and set
\[
  A_N:=\sum_{\abs n\leq N}n\abs{a_n}^2.
\]
For $N=N(\e):=\lfloor\e^{-1}\rfloor$, define
\[
  \sigma_{n,\e}:=
  \begin{cases}
    1,&\abs n\leq N,\\[1mm]
    \displaystyle\frac{L-A_N}{k_N\abs{a_{k_N}}^2},&n=k_N,\\[3mm]
    0,&\text{otherwise}.
  \end{cases}
\]
For each $\varepsilon$ the sequence has finite support, so it satisfies
\eqref{eq:summation-1}.  For every fixed $n$, it equals $1$ once
$N(\e)\geq\abs n$, and hence \eqref{eq:summation-2} holds.  Direct
substitution gives $Q_\e^\sigma(f)=L$. In particular by letting $L=\deg f$, we can recover the degree.

It remains only to consider a circle-valued map $f$ with finite Fourier support.
Let $S_f=\{n:\hat{f}(n)\neq 0\}$ and $m=\min S_f$, $M=\max S_f$. Then $\widehat{|f|^2}(M-m)=\hat f(M)\overline{\hat f(m)}\neq0$. Since $|f|^2=1$ this gives $M=m$ and $f(z)=\hat f(m)z^m$ with $|\hat f(m)|=1$.
For a monomial, every summation process gives
$Q_\e^\sigma(f)=m\sigma_{m,\e}\to m=\deg f$.  Thus every continuous or $\operatorname{VMO}$ circle map admits a process
depending on it that recovers the degree.  This proves the two assertions
about Open Problems~5.7 and~5.8.
\end{proof}

\end{document}